\newtheorem{theorem}{Theorem}
\newtheorem{lemma}[theorem]{Lemma}
\newtheorem{proposition}[theorem]{Proposition}
\newtheorem{corollary}[theorem]{Corollary}
\newtheorem{question}[theorem]{Question}
\theoremstyle{definition}
\newtheorem{example}[theorem]{Examples}
\newtheorem{remark}[theorem]{Remark}
\begin{document}
\title{New characterizations of the unit vector basis of $c_0$ or $\ell_p$}
%\dedicatory{}

%------------------------------------------------------------------------------------------------------------------------------------------------

\author[P. G. Casazza]{Peter G. Casazza}

\address{Department of Mathematics, University of Missouri, Columbia, MO 65211-4100, USA}

\email{casazzap@missouri.edu}

%------------------------------------------------------------------------------------------------------------------------------------------------

\author[S. J. Dilworth]{Stephen J. Dilworth}

\address{Department of Mathematics, University of South Carolina, Columbia,
SC 29208, USA}

\email{dilworth@math.sc.edu}

%------------------------------------------------------------------------------------------------------------------------------------------------

\author[D. Kutzarova]{Denka Kutzarova}

\address{Department of Mathematics, University of Illinois Urbana-Champaign,
Urbana, IL 61807, USA; Institute of Mathematics and Informatics, Bulgarian Academy of Sciences, Sofia, Bulgaria}

\email{denka@illinois.edu}

%------------------------------------------------------------------------------------------------------------------------------------------------

\author[P. Motakis]{Pavlos Motakis}

\address{Department of Mathematics and Statistics, York University, 4700 Keele Street, Toronto, Ontario, M3J 1P3, Canada}

\email{pmotakis@yorku.ca}

%------------------------------------------------------------------------------------------------------------------------------------------------

\thanks{The first author was supported by
 NSF DMS 1906025.  The second author was supported by Simons Foundation Collaboration Grant No. 849142.
 The third author was supported by Simons Foundation Collaboration Grant No. 636954. The fourth author was supported by NSERC Grant RGPIN-2021-03639.}

%\date{\today}

%\keywords{}

%\subjclass[2020]{46B03, 46B06, 46B10, 46B25.}

\begin{abstract} Motivated by Altshuler's  famous characterization of  the unit vector basis of $c_0$ or  $\ell_p$  among  symmetric bases \cite{A1}, 
we obtain similar characterizations among democratic bases  and among  bidemocratic bases. We also prove a separate characterization of the unit vector basis of $\ell_1$.
\end{abstract}

\maketitle

\section{Introduction}

Let $(X,\|\cdot\|)$ be a Banach space with dual space $(X^*,\|\cdot\|_{*})$, let $(e_i)$ be a semi-normalized basic sequence in $X$, and let $\alpha = \sum_{i=1}^n a_i e_i$
and $\beta = \sum_{i=1}^m b_i e_i$,  where $b_m \ne 0$, belong to $\operatorname{span}((e_i))$. Define \begin{equation} \label{eq: multiplication}
\alpha \otimes \beta = \sum_{i=1}^n  a_i(\sum_{j=1}^m b_j e_{(i-1)m+j})\end{equation} and $\alpha \otimes 0 = 0$.
Note that   $(\operatorname{span}((e_i)),\otimes)$ is a noncommutative semigroup with identity element $e_1$. However, the semigroup multiplication is not continuous.

 If $(e_i)$ is equivalent to the unit vector basis  (u.v.b.) of  $c_0$ or $\ell_p$ ($1 \le p < \infty$) then
 there exists $K>0$ such that for all $\alpha, \beta \in \operatorname{span}((e_i))$,  \begin{equation} \label{eq: bothsides} \frac{1}{K}  \|\alpha\| \|\beta\| \le \|\alpha \otimes \beta \| \le K \|\alpha\| \|\beta\|.
 \end{equation} 
 
  The following is the  main open question related to this note.
  \begin{question} \label{question} Let $(e_i)$ be a semi-normalized basis for $X$ satisfying  \eqref{eq: bothsides}. Is $(e_i)$ equivalent to the u.v.b.\  of  $c_0$ or $\ell_p$ for some $1 \le p < \infty$?
  \end{question} We prove below that  Question~\ref{question} has a positive answer when  $(e_i)$  is either bidemocratic, almost greedy, or invariant under spreading.
  However, we do not know the answer  in general or  for other natural classes of bases such as the class of unconditional bases.
  
  Suppose that  $(e_i)$ is a symmetric basis for $X$ and that  both $X$ and $[(e^*_i)]$ have a unique symmetric basic sequence up to equivalence. 
  Altshuler \cite{A1} proved that $(e_i)$ is equivalent to the u.v.b.\ of $c_0$ or $\ell_p$. This theorem was recently extended to subsymmetric bases  \cite{CDKM}. In Section~\ref{sec: bidemocratic}  we adapt Altshuler's proof to give an answer to Question~\ref{question} for bidemocratic bases (see Section~\ref{sec: notation} for the definition of democratic and bidemocratic bases)  by imposing on  $(e_i)$ a  condition that is formally weaker  than  \eqref{eq: bothsides}. In Section~\ref{sec: p=1case}  we provide a new characterization of $\ell_1$ which implies a solution to Question~\ref{question}
  under additional assumptions. The proof uses  Szemer\'edi's theorem on arithmetic progressions.  In  Section~\ref{sec: democratic}  we use the latter characterization  to  provide an answer to Question~\ref{question}  for the class of democratic bases.
  
 The final section contains results about subsequences. We prove that Question~\ref{question} has a positive solution if $(e_i)$ is invariant under spreading or, more generally,  if every subsequence of $(e_i)$ satisfies \eqref{eq: bothsides}. Using infinite Ramsey theory we  also prove a related characterization of basic sequences which are saturated by subsequences equivalent to the u.v.b. of  $c_0$ or $\ell_p$.

\section{Notation and terminology} \label{sec: notation}

We use standard Banach space theory notation and terminology as in \cite{LT1}.  We also require some terminology and results from the theory of greedy bases
which we briefly review here. For further information on this topic we refer the reader to \cite{AK} and \cite{T}.

Let  $(e_i)$ be a semi-normalized basic sequence in $X$. For finite $A \subset \mathbb{N}$, let $\lambda(A) = \|\sum_{i \in A} e_i\|$. The \textit{ fundamental function} $(\Phi(n))$ of $(e_i)$ is defined by
$$\Phi(n) = \sup \{\lambda(A) \colon |A| \le n\}.$$
We say that $(e_i)$ is \textit{democratic} with constant $\Delta$ if  $\Phi(|A|) \le \Delta \lambda(A)$ for all finite $A \subset \mathbb{N}$. Democratic bases were introduced in \cite{KT1} in order to prove the celebrated characterization of  greedy bases as  unconditional and democratic.

 We say that $(e_i)$ is \textit{unconditional for constant coefficients} if there exists $C>0$ such that, for all finite $A \subset \mathbb{N}$ and all choices of signs $\pm$,
$$\frac{1}{C} \lambda(A) \le \|\sum_{i \in A} \pm e_i\|  \le C\ \lambda (A).$$

Now suppose that $(e_i)$ is a Schauder basis for $X$ with biorthogonal functionals $(e_i^*) \subset X^*$. Let $(\Phi^*(n))$ be the fundamental function of $(e_i^*)$.  We say that  $(e_i)$ is \textit{bidemocratic} if  $$\Phi^*(n) \asymp \frac{n}{\Phi(n)}.$$
(We write $a_n \asymp b_n$ if there exists $C>0$ such that  $a_n/C \le b_n \le Ca_n$ for all $n\in \mathbb{N}$.)
It is known that if $(e_i)$ is bidemocratic then both $(e_i)$ and $(e_i^*)$ are democratic and unconditional for constant
 coefficients \cite[Prop. 4.2]{DKKT}.
Moreover, every subsymmetric basis is bidemocratic \cite[Prop. 3.a.6]{LT1}.

\begin{remark} \label{rem: extremecases} As is well-known,  if $(e_i)$ is bidemocratic and $(\Phi(n))$ is bounded then there exists $C>0$ such that $\|\sum_{i-1}^n \pm e_i\| \le C$ for all $n\ge1$ and for all choices of signs. Hence $(e_i)$ is equivalent to the u.v.b.\ of $c_0$. 
On the other hand, if $(e_i)$ is bidemocratic and $\Phi(n) \asymp n$, then $(\Phi^*(n))$ is bounded. Hence $(e^*_i)$ is equivalent to the u.v.b.\ of $c_0$. By duality, $(e_i)$ is equivalent to the u.v.b.\ of $\ell_1$.
\end{remark}
\begin{example} \label{examples} 1)  For $1<p<\infty$, let $(e_i)$ be the u.v.b.\ of $p$-convexified Tsirelson space $T_p$ \cite{FJ}. Then $(e_i)$ is democratic and unconditional and $\Phi(n) \asymp n^{1/p}$. Since $(n^{1/p})$ has the \textit{upper regularity property} (see \cite[p.586]{DKKT})  it follows  from \cite[Prop.5.4]{DKKT} that $(e_i)$ is bidemocratic. Moreover, from the  special properties  of block bases in $T_p$  \cite[Cor.  II.5]{CS} it follows that  there exists $K>0$ such that for all $\alpha, \beta \in
 \operatorname{span}((e_i))$,
\begin{equation} \label{eq: multiplicative} \|\alpha\| \|\beta\| \le K \|\alpha \otimes \beta\|. \end{equation} 
2)  Let $(e_i)$ be a subsymmetric basis and  suppose that all subsymmetric block bases of $(e_i)$ are equivalent to $(e_i)$.
Altshuler \cite{A} constructed the first symmetric example  of this type (other than the u.v.b.\ of $c_0$ or $\ell_p$). A second symmetric example was constructed in \cite{CS}.  Recently, the first example of a subsymmetric basis
of this type,   which in addition is \textit{not} symmetric, was constructed in \cite{CDKM}. 

It was proved  in \cite[Lemma~17]{CDKM} that if $(e_i)$ is any  subsymmetric basis of this type, 
then there exists $K>0$ such that  for all $\alpha^*, \beta^* \in \operatorname{span}((e^*_i))$,
$$ \|\alpha^*\|_* \|\beta^*\|_* \le K \|\alpha^* \otimes \beta^*\|_*.$$

\begin{remark}
\label{condition implication}
Suppose that $(e_i)$ is a basis that satisfies \eqref{eq: bothsides} with constant $K$. Then, there exists a constant $\tilde K$ such that for every $n\in\mathbb{N}$ and $\alpha \in \operatorname{span}((e_i))$, $\alpha^* \in \operatorname{span}((e^*_i))$,
\begin{equation} \label{eq: alln}
\|\alpha\|^n \le \tilde K^n \|\alpha^n\|\quad\text{and}\quad \|\alpha^{*}\|_*^n \le \tilde K^n \|\alpha^{*n}\|_*.
\end{equation} 
Note that the left inequality follows simply by iterating \eqref{eq: bothsides}. For the right one, assume without loss of generality that $(e_i)$ is normalized and monotone. First, make the following easy observation. If $a = \sum_{i=1}^ma_ie_i$ and $a^* = \sum_{i=1}^mb_ie_i^*$ with $a_m\neq 0$ and $b_m\neq 0$ then, for all $n\in\mathbb{N}$, $a^{*n}(a^n) = (a^*(a))^n$.

Next, take an $a^* = \sum_{i=1}^mb_ie_i^*$ and $n\in\mathbb{N}$. By monotonicity, find a norm-one $a = \sum_{i=1}^\ell a_ie_i$ with $\ell\leq m$ such that $a^*(a) = \|a^*\|_*$. Assume first that $\ell < m$. For $\varepsilon > 0$ let $a_\varepsilon = a + \varepsilon e_m$ and note that $|a^*(a_\varepsilon)| \geq (1-\varepsilon)\|a^*\|_*$, while $\|a_\varepsilon\| \leq 1 + \varepsilon$. Therefore,
\[(1-\varepsilon)^{n}\|a^*\|_*^n \leq |a^{*}(a_\varepsilon)|^n = |a^{*n}(a_\varepsilon^n)| \leq \|a^{*n}\|_*\|a_\varepsilon^n\|_* \leq K^n(1+\varepsilon)^n\|a^{*n}\|\]
and let $\varepsilon\to0$. If $\ell =m$ the argument is slightly simpler and does not require the perturbation of $a$.  
\end{remark}

 \section{Bidemocratic bases} \label{sec: bidemocratic}

The main theorem of this section is the following.
\begin{theorem}\label{main thm: bidemocratic}
Suppose that $(e_i)$ is a bidemocratic basis for $X$ that satisfies  \eqref{eq: bothsides}. Then $(e_i)$ is equivalent to the u.v.b.\ of $c_0$ or $\ell_p$.
\end{theorem}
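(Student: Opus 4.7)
I would follow the strategy of Altshuler's classical argument. Let $\phi(n) = \|\sum_{i=1}^n e_i\|$. The key identity
\[\Bigl(\sum_{i=1}^n e_i\Bigr) \otimes \Bigl(\sum_{j=1}^m e_j\Bigr) = \sum_{k=1}^{nm} e_k,\]
combined with \eqref{eq: bothsides}, yields $K^{-1}\phi(n)\phi(m) \le \phi(nm) \le K\phi(n)\phi(m)$. A standard perturbation of Cauchy's functional equation, applied to $k \mapsto \log \phi(2^k)$ and then interpolated to general $n$ using that $\phi$ is essentially non-decreasing (a consequence of the unconditionality for constant coefficients furnished by bidemocracy), produces an exponent $p \in [1, \infty]$ with $\phi(n) \asymp n^{1/p}$; here $p = \infty$ corresponds to $\phi$ being bounded.

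The endpoint cases $p = \infty$ and $p = 1$ are immediate from Remark~\ref{rem: extremecases}, yielding $(e_i) \sim$ u.v.b.\ of $c_0$ and $\ell_1$ respectively. For $1 < p < \infty$, the goal is to show $\|\alpha\| \asymp (\sum_i |a_i|^p)^{1/p}$ for every finitely supported $\alpha = \sum_{i=1}^n a_i e_i$, by analyzing the iterated tensor $\alpha^{\otimes N}$. Induction on \eqref{eq: bothsides} gives
\[K^{-(N-1)}\|\alpha\|^N \le \|\alpha^{\otimes N}\| \le K^{N-1}\|\alpha\|^N,\]
so $\|\alpha^{\otimes N}\|^{1/N}$ lies within a factor $K^{(N-1)/N} \to K$ of $\|\alpha\|$ as $N \to \infty$. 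Crucially, $\alpha^{\otimes N}$ has an explicit multinomial structure: on the level set $L_{\vec k} = \{(i_1, \dots, i_N) \in [n]^N : |\{s : i_s = j\}| = k_j\}$, of cardinality $\binom{N}{k_1, \dots, k_n}$, the coefficient is the constant $\prod_j a_j^{k_j}$.

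The heart of the proof is then to show, using bidemocracy, that
\[\|\alpha^{\otimes N}\|^p \asymp \sum_{\vec k} \Bigl(\prod_j |a_j|^{k_j}\Bigr)^p \binom{N}{k_1, \dots, k_n} = \Bigl(\sum_j |a_j|^p\Bigr)^N,\]
whence taking $N$-th roots would give $\|\alpha\| \asymp (\sum_j |a_j|^p)^{1/p}$. This amounts to a step-function estimate: the norm of a disjoint sum $\sum_L c_L \sum_{i \in L} e_i$ of constant-coefficient blocks must be comparable to the weighted $\ell_p$-sum $(\sum_L |c_L|^p |L|)^{1/p}$. This is the main obstacle, since bidemocracy alone is insufficient---the $p$-convexified Tsirelson space of Example~\ref{examples} has the correct fundamental function but fails to be $\ell_p$-equivalent because it satisfies only one direction of \eqref{eq: bothsides}. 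Accordingly, the estimate must be extracted by combining both directions of \eqref{eq: bothsides} with the dual iterated inequality of Remark~\ref{condition implication}: the identity $(\sum_L c_L e_L) \otimes \sum_{j=1}^m e_j = \sum_L c_L \sum_{j \in J_L} e_j$ (with $J_L$ disjoint intervals of size $m$) together with \eqref{eq: bothsides} reduces the equi-sized block case to the original basis, and the multinomial case must then be reduced to the equi-sized case via an approximation or truncation argument exploiting the concentration of $\binom{N}{\vec k}$ on typical multi-indices, with duality through $\phi^*(n) \asymp n^{1/q}$ providing the matching lower $p$-estimate.
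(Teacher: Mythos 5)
Your overall architecture matches the paper's: derive $\lambda(mn)\asymp\lambda(m)\lambda(n)$ from \eqref{eq: bothsides}, conclude $\Phi(n)\asymp n^{1/p}$, dispose of $p=1$ and $p=\infty$ via Remark~\ref{rem: extremecases}, expand $\alpha^{\otimes N}$ into multinomial level sets, and invoke duality with $\Phi^*(n)\asymp n^{1/q}$ at the end. But the central step, as you have framed it, has a genuine gap. You pose as the key lemma a \emph{two-sided} estimate $\|\alpha^{\otimes N}\|^p\asymp(\sum_j|a_j|^p)^N$, reduce it to a two-sided step-function comparison $\|\sum_L c_L\sum_{i\in L}e_i\|\asymp(\sum_L|c_L|^p|L|)^{1/p}$, and propose to prove that by reducing to equi-sized blocks via ``concentration of $\binom{N}{k_1,\dots,k_n}$ on typical multi-indices.'' That two-sided step-function estimate is essentially the conclusion of the theorem for constant-coefficient block sums, no argument is given for its lower half, and the concentration/truncation reduction is not carried out; it is unclear how it could be, since on the lower-bound side the atypical multi-indices still carry coefficients and cannot simply be discarded.

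The idea you are missing is that no lower bound on $\|\alpha^{\otimes N}\|$ is ever needed. Because the lower $\ell_p$-estimate will come from duality, one only needs the \emph{upper} bound $\|\alpha^{\otimes N}\|\le C(N+1)^{m/q}\bigl(\sum_j|a_j|^p\bigr)^{N/p}$, and this follows from the triangle inequality alone: bound the norm of each level set's contribution by $\prod_j|a_j|^{k_j}\,\Phi\bigl(\tbinom{N}{k_1,\dots,k_m}\bigr)\lesssim\prod_j|a_j|^{k_j}\tbinom{N}{k_1,\dots,k_m}^{1/p}$, sum over the at most $(N+1)^m$ multi-indices, and apply H\"older's inequality together with the multinomial identity $\sum_{\vec k}\tbinom{N}{k_1,\dots,k_m}\prod_j|a_j|^{pk_j}=(\sum_j|a_j|^p)^N$. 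The polynomial prefactor $(N+1)^{m/q}$ disappears upon taking $N$-th roots against $\|\alpha\|^N\le K^N\|\alpha^{\otimes N}\|$, giving the upper $\ell_p$-estimate \eqref{eq: upper}. Running the identical one-sided computation for $(e_i^*)$, which satisfies \eqref{eq: alln} by Remark~\ref{condition implication} and has $\Phi^*(n)\asymp n^{1/q}$, gives the upper $\ell_q$-estimate \eqref{eq: lower} for the dual, which dualizes to the missing lower $\ell_p$-estimate. With that substitution your outline becomes the paper's proof of Proposition~\ref{thm: bidemocratic}.
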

The above is an immediate consequence of the next proposition and Remark \ref{condition implication}.

\begin{proposition} \label{thm: bidemocratic}  Suppose that $(e_i)$ is a bidemocratic basis for $X$ that satisfies \eqref{eq: alln}. Then $(e_i)$ is equivalent to the u.v.b.\ of $c_0$ or $\ell_p$. \end{proposition}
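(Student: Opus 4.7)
The plan is to adapt Altshuler's argument \cite{A1} for symmetric bases, replacing the role of symmetry and uniqueness of symmetric basic sequences with bidemocracy and the self-multiplicative condition \eqref{eq: alln}. First, I would analyze the fundamental function. Applying \eqref{eq: alln} to $\alpha = \sum_{i=1}^k e_i$, and observing that $\alpha^n = \sum_{l=1}^{k^n} e_l$ while $(e_i)$ is democratic (a consequence of bidemocracy), one obtains $\Phi(k)^n \le C \tilde K^n \Phi(k^n)$. Applying the dual version of \eqref{eq: alln} to $\sum_{i=1}^k e_i^*$ and then invoking the bidemocratic identity $\Phi(m)\Phi^*(m) \asymp m$ also yields the matching upper estimate $\Phi(k^n) \le C' \tilde K^n \Phi(k)^n$. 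This quasi-multiplicative control, combined with the doubling property of $\Phi$ (another consequence of bidemocracy), forces $\Phi(n) \asymp n^{1/p}$ for some $p \in [1, \infty]$.

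The extreme cases $p = \infty$ (when $\Phi$ is bounded) and $p = 1$ (when $\Phi(n) \asymp n$) are handled directly by Remark \ref{rem: extremecases}, giving equivalence to the u.v.b.\ of $c_0$ or $\ell_1$ respectively. In the main case $1 < p < \infty$, the goal is to establish $\|\alpha\| \asymp (\sum |a_i|^p)^{1/p}$ for every $\alpha = \sum a_i e_i \in \operatorname{span}((e_i))$. I would derive the upper $\ell_p$-estimate by decomposing $\alpha$ into its magnitude-level sets, applying unconditionality for constant coefficients (provided by bidemocracy) together with $\Phi(n) \asymp n^{1/p}$ on each level, and controlling the interaction across levels by applying \eqref{eq: alln} to suitable test elements. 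The matching lower $\ell_p$-estimate would follow by applying the same argument to $(e_i^*)$, which inherits bidemocracy, satisfies the hypotheses of the proposition, and has fundamental function $\Phi^*(n) \asymp n^{1/p'}$; the resulting upper $\ell_{p'}$-estimate for $(e_i^*)$ is equivalent by Hahn--Banach duality to the desired lower $\ell_p$-estimate for $(e_i)$.

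The main obstacle is the step from $\Phi(n) \asymp n^{1/p}$ to full $\ell_p$-equivalence. This is delicate because unconditionality for constant coefficients together with $\Phi(n) \asymp n^{1/p}$ is compatible with a range of Lorentz-type norms, so one genuinely needs the multiplicative structure of \eqref{eq: alln} to collapse the gap. Moreover, the $p$-convexified Tsirelson space $T_p$ of Example \ref{examples}(1) shows that the primal half of \eqref{eq: alln} alone does not suffice: it satisfies the primal half (indeed the stronger \eqref{eq: multiplicative}) but fails the dual half, and this dual condition is precisely what must rule out Tsirelson-type behaviour. Translating $\|\alpha\|^n \le \tilde K^n \|\alpha^n\|$---a statement about the $n$-fold tensor $\alpha^n$, whose coefficients $\prod_l a_{i_l}$ are distributed across $|\operatorname{supp}(\alpha)|^n$ basis vectors---into coefficient-level estimates for $\alpha$ itself is the heart of the argument and what must be carefully adapted from Altshuler's original proof.
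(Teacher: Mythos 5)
Your overall architecture matches the paper's proof exactly: use \eqref{eq: alln} on $\sum_{i=1}^k e_i$ and its dual together with the bidemocratic identity to pinch $\lambda(n)$ between $K^{-k}\lambda(n)^k \le \lambda(n^k) \le CK^k\lambda(n)^k$, conclude $\Phi(n)\asymp n^{1/p}$ via \cite[Th.~2.a.9]{LT1}, dispose of $p=1,\infty$ by Remark~\ref{rem: extremecases}, and for $1<p<\infty$ prove an upper $\ell_p$-estimate for $(e_i)$ and an upper $\ell_q$-estimate for $(e_i^*)$, which dualize to equivalence with the u.v.b.\ of $\ell_p$. Your remark that $T_p$ satisfies the primal half of \eqref{eq: alln} but must fail the dual half is a correct and useful sanity check on why both halves are needed.

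However, the step you yourself identify as ``the heart of the argument'' is not actually carried out, and the mechanism you sketch for it is not the right one. Decomposing $\alpha=\sum a_ie_i$ into magnitude-level sets and applying unconditionality for constant coefficients with $\Phi(n)\asymp n^{1/p}$ on each level gives an upper bound by a Lorentz $\ell_{p,1}$-type expression, which (as you note) does not collapse to $\ell_p$; saying one then ``controls the interaction across levels by applying \eqref{eq: alln} to suitable test elements'' names no test elements and no inequality, so the gap is simply deferred. The paper's actual device is different and is where all the work happens: expand the $n$-fold power multinomially,
\[
\alpha^n=\sum_{i_1+\dots+i_m=n} a_1^{i_1}\cdots a_m^{i_m}\Bigl(\sum_{j\in A(i_1,\dots,i_m)}e_j\Bigr),\qquad |A(i_1,\dots,i_m)|=\binom{n}{i_1\,\dots\,i_m},
\]
bound each constant-coefficient block by $\Phi\bigl(\binom{n}{i_1\dots i_m}\bigr)\lesssim \binom{n}{i_1\dots i_m}^{1/p}$, apply H\"older's inequality over the $O\bigl((n+1)^{m}\bigr)$ multinomial indices so that the multinomial theorem produces exactly $\bigl(\sum_i|a_i|^p\bigr)^{n/p}$ up to a factor $(n+1)^{m/q}$, and then use $\|\alpha\|^n\le K^n\|\alpha^n\|$, take $n$-th roots, and let $n\to\infty$ so the polynomial factor disappears. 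Without this computation (or a genuine substitute for it), the proposal is an outline of Altshuler's strategy rather than a proof; you should supply the multinomial--H\"older--limit argument explicitly for both $(e_i)$ and $(e_i^*)$.
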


\begin{proof}  For $n \ge 1$,  let $\lambda(n) = \|\sum_{i=1}^n e_i\|$ and $\lambda^*(n) =\|\sum_{i=1}^n e^*_i\|_*$. Note that if $\alpha = \sum_{i=1}^n e_i$ and $k \ge 1$, then $\alpha^k = \sum_{i=1}^{n^k}e_i$. So \eqref{eq: alln}  applied to $
\alpha$ yields \begin{equation}\label{eq: upperest} [\lambda(n)]^k \le K^k \lambda(n^k). \end{equation}
Similarly  \eqref{eq: alln} applied to $\alpha^* = \sum_{i=1}^n e_i^*$ gives
\begin{equation*}  [\lambda^*(n)]^k \le K^k \lambda^*(n^k). \end{equation*}
Since $(e_i)$ is bidemocratic, there exists $C>0$ such that 
\begin{equation*} \frac{n}{\lambda(n)} \le \lambda^*(n) \le C \frac{n}{\lambda(n)}. \end{equation*}
Hence $$ \left[\frac{n}{\lambda(n)}\right]^k \le CK^k \frac{n^k}{\lambda(n^k)},$$
i.e., \begin{equation} \label{eq: lowerest} \lambda(n^k) \le CK^k[\lambda(n)]^k. \end{equation}
By the proof  (see p.60) of  \cite[Th. 2.a.9]{LT1}, \eqref{eq: upperest} and \eqref{eq: lowerest} imply that  $\lambda(n) \asymp n^{1/p}$ for some $1 \le p \le \infty$.
Since $(e_i)$ is bidemocratic, it follows that  $\Phi(n) \asymp n^{1/p}$ and $\Phi^*(n) \asymp n^{1/q}$, where $q=p/(p-1)$. By Remark~\ref{rem: extremecases}, if $p=1$ or $p=\infty$ then $(e_i)$ is equivalent to the u.v.b.\ of $\ell_1$ or $c_0$ respectively. 

So suppose $1<p<\infty$. Consider $\alpha = \sum_{i=1}^m a_i e_i \in \operatorname{span}((e_i))$.  By \eqref{eq: alln}, $\|\alpha\|^n \le K^n \|\alpha^n\|$
for each $n \ge 1$. Note that
$$\alpha^n = \sum_{i_1+\dots +i_m =n} a_1^{i_1}a_2^{i_2}\cdots a_m^{i_m}(\sum_{j \in A(i_1,\dots,i_m)} e_j),$$
where $$ |A(i_1,\dots,i_m)| = \binom{n}{i_1 \dots i_m}.$$ Hence  \begin{align*} \|\alpha^n\| &\le 
 \sum_{i_1+\dots+ i_m =n} |a_1^{i_1}a_2^{i_2}\cdots a_m^{i_m}| \lambda(|A(i_1,\dots,i_m)|)\\
 & \le  \sum_{i_1+\dots+ i_m =n} |a_1^{i_1}a_2^{i_2}\cdots a_m^{i_m}| \Phi(|A(i_1,\dots,i_m)|)\\&\le
 C \sum_{i_1+\dots+ i_m =n} |a_1^{i_1}a_2^{i_2}\cdots a_m^{i_m}|  \binom{n}{i_1 \dots i_m}^{1/p}
 \intertext{(for some $C>0$)}
 &\le C(n+1)^{m/q}(\sum_{i_1+\dots+ i_m =n} |a_1^{i_1}a_2^{i_2}\cdots a_m^{i_m}|^p  \binom{n}{i_1 \dots i_m})^{1/p}
\end{align*} by H\"{o}lder's inequality. Hence
$$\|\alpha\|^n \le K^n C (n+1)^{m/q} (\sum_{i=1}^m |a_i|^p)^{n/p}.$$
Taking the $n^{th}$ root and then the  limit as $n \rightarrow \infty$ gives
\begin{equation} \label{eq: upper}\|\sum_{i=1}^m a_i e_i\| \le K (\sum_{i=1}^m |a_i|^p)^{1/p}.\end{equation} 
Since $(e_i^*)$  also satisfies \eqref{eq: alln} and $\Phi^*(n) \asymp n^{1/q}$, the same argument gives
\begin{equation} \label{eq: lower} \|\sum_{i=1}^m a_i e^*_i\|_* \le K (\sum_{i=1}^m |a_i|^q)^{1/q}. \end{equation}
Hence, by duality, $(e_i)$ is equivalent to the u.v.b.\ of $\ell_p$.
\end{proof}
The following corollary replaces \eqref{eq: alln}  by a weaker condition but adds an assumption about the fundamental function.
\begin{corollary} Let $1 < p < \infty$. Suppose that $(e_i)$ is a bidemocratic basis for $X$ such that $\Phi(n) \asymp n^{1/p}$. Suppose also that there exists $K>0$  such that
for all $\alpha \in \operatorname{span}((e_i))$ and $\alpha^* \in \operatorname{span}((e^*_i))$,
\begin{equation} \label{eq: squares}  \|\alpha\|^2 \le K \|\alpha^2\|\quad\text{and}\quad \|\alpha^{*}\|_*^2 \le K \|\alpha^{*2}\|_*.
\end{equation} 
Then $(e_i)$ is equivalent to the u.v.b.\ of $\ell_p$.
\end{corollary}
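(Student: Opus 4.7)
The plan is to mimic the proof of Proposition~\ref{thm: bidemocratic}, replacing the full power condition \eqref{eq: alln} by repeated iteration of the weaker square condition \eqref{eq: squares}. Since $(e_i)$ is bidemocratic with $\Phi(n)\asymp n^{1/p}$, the democratic property yields $\lambda(n)\asymp\Phi(n)\asymp n^{1/p}$, and the bidemocracy relation $\Phi^*(n)\asymp n/\Phi(n)$ gives $\Phi^*(n)\asymp n^{1/q}$, where $q=p/(p-1)$. Consequently the first half of the proof of Proposition~\ref{thm: bidemocratic} (establishing $\lambda(n)\asymp n^{1/p}$) is unnecessary here, and we may proceed directly to the H\"older-type estimate.

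The first step is to iterate \eqref{eq: squares}. A straightforward induction shows that for all $k\ge 1$ and all $\alpha\in\operatorname{span}((e_i))$,
\[\|\alpha\|^{2^k}\le K^{2^k-1}\|\alpha^{2^k}\|,\]
and analogously for elements of $\operatorname{span}((e_i^*))$. Indeed, if $\|\alpha\|^{2^k}\le c_k\|\alpha^{2^k}\|$, then applying \eqref{eq: squares} to $\alpha^{2^k}$ yields $\|\alpha^{2^k}\|^2\le K\|\alpha^{2^{k+1}}\|$, so $c_{k+1}=Kc_k^2$; with $c_1=K$ this produces $c_k=K^{2^k-1}$.

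Next I would repeat the multinomial and H\"older argument from the proof of Proposition~\ref{thm: bidemocratic}, restricted to $n=2^k$. Writing $\alpha=\sum_{i=1}^m a_ie_i$ and using $\Phi(n)\asymp n^{1/p}$, the same calculation produces a constant $C>0$ such that, for every $k$,
\[\|\alpha\|^{2^k}\le K^{2^k-1}\,C\,(2^k+1)^{m/q}\left(\sum_{i=1}^m|a_i|^p\right)^{2^k/p}.\]
Taking $2^k$-th roots and letting $k\to\infty$, the three factors $K^{1-2^{-k}}$, $C^{2^{-k}}$, and $(2^k+1)^{m/(q\cdot 2^k)}$ tend to $K$, $1$, and $1$ respectively, leaving $\|\alpha\|\le K\left(\sum_{i=1}^m|a_i|^p\right)^{1/p}$. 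The dual argument is symmetric and yields the analogous $\ell_q$ upper bound for $(e_i^*)$, which by duality gives the matching lower $\ell_p$ bound for $(e_i)$. The only delicate point is the constant bookkeeping in the iteration: the recurrence produces $K^{2^k-1}$, whose $2^k$-th root tends to $K$ rather than blowing up, so the limiting argument goes through without modification.
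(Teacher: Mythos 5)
Your proof is correct and follows essentially the same route as the paper's: iterate \eqref{eq: squares} to obtain $\|\alpha\|^n \le K^n\|\alpha^n\|$ (and its dual) for $n$ a power of $2$, then run the multinomial/H\"older argument of Proposition~\ref{thm: bidemocratic} along $n=2^k$ to get \eqref{eq: upper} and \eqref{eq: lower} and finish by duality. Your constant bookkeeping ($K^{2^k-1}$, which uses associativity of $\otimes$) is in fact slightly sharper than the paper's $K^n$, and your observation that the $\lambda(n)\asymp n^{1/p}$ step is supplied by the hypothesis is accurate.
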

\begin{proof} Iteration of  \eqref{eq: squares}  yields  $\|\alpha\|^n \le K^n \|\alpha^n\|$ and   $\|\alpha^{*}\|_*^n  \le K^n \|\alpha^{*n}\|_*$
when $n$ is a power of $2$. This is enough to prove   \eqref{eq: upper} and \eqref{eq: lower}  and conclude the proof as above.
\end{proof}
\section{A characterization of $\ell_1$} \label{sec: p=1case}\begin{theorem} \label{thm: p=1case}
 Let $(e_i)$ be a semi-normalized basis for $X$. Suppose that there exists $K>0$ such that, for all $\alpha,\beta \in \operatorname{span}((e_i))$, 
\begin{equation} \label{eq: upperineq}\|\alpha \otimes \beta\| \le K \|\alpha\| \|\beta\|,\end{equation}
and also that \begin{equation} \label{eq: averageoversigns} \limsup \frac{1}{n}\underset{\pm}{\operatorname{Ave}}\left\|\sum_{i=1}^n \pm e_i\right\|>0. \end{equation}
Then $(e_i)$ is equivalent to the u.v.b.\ of $\ell_1$.
\end{theorem}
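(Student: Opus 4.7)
The plan is to use the hypotheses to show that $\lambda(n):=\|\sum_{i=1}^ne_i\|$ grows linearly in $n$, and then to upgrade this to $(e_i)\sim\ell_1$-basis. First, from the identity $\sum_i\epsilon_ie_i=2\sum_{i\in A_\epsilon}e_i-\sum_ie_i$ for $A_\epsilon=\{i:\epsilon_i=+1\}$, combined with the triangle inequality and \eqref{eq: averageoversigns}, under the contradiction hypothesis $\lambda(n)=o(n)$ a positive fraction of subsets $A\subset[1,n]$ satisfies $\|\sum_{i\in A}e_i\|\geq\delta n/8$ along an infinite subsequence of $n$'s. Using semi-normalization with $M=\sup_i\|e_i\|$, each such set $A_n$ has density at least $\delta/(8M)$ in $[1,n]$.

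Second, Szemer\'edi's theorem applied to $A_n$ for $n$ large provides arithmetic progressions of any prescribed length $k$. Using the abundance of APs (Varnavides) and a pigeonhole on the parameters $(a,d)$ of a generic AP $\{a,a+d,\ldots,a+(k-1)d\}$, I would seek to extract from $A_n$ a progression of the special shape $\{d,2d,\ldots,kd\}$, whose first term equals its common difference. Such a shape admits the tensor product representation $\sum_{j=1}^ke_{jd}=\bigl(\sum_{j=1}^ke_j\bigr)\otimes e_d$, so the upper estimate \eqref{eq: upperineq} yields $\|\sum_{j=1}^ke_{jd}\|\leq KM\lambda(k)$. A matching linear lower bound $\|\sum_{j=1}^ke_{jd}\|\geq c_3k$, to be extracted from the large-norm property of the ambient set $A_n$ by re-applying the average-sign identity at the block scale dictated by the AP, forces $\lambda(k)\gtrsim k$ for arbitrarily large $k$, contradicting $\lambda(n)=o(n)$; hence $\lambda(n)\asymp n$.

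Third, with $\lambda(n)\asymp n$ in hand, for any $\alpha=\sum_{i=1}^ma_ie_i$ I would consider $\alpha\otimes\bigl(\sum_{j=1}^Ne_j\bigr)=\sum_ia_if_i$, where $f_i=\sum_{j=(i-1)N+1}^{iN}e_j$ are disjoint blocks of norm $\lambda(N)\asymp N$. The estimate \eqref{eq: upperineq} gives $\|\sum a_if_i\|\leq K\|\alpha\|\lambda(N)$, while applying \eqref{eq: averageoversigns} at scale $mN$ combined with the linearity of $\lambda$ produces the lower bound $\|\sum a_if_i\|\geq c\lambda(N)\sum|a_i|$. Cancelling $\lambda(N)$ yields the required $\|\alpha\|\geq c'\sum|a_i|$.

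The main obstacle is the transfer, within the second paragraph, from the norm estimate for the ambient dense set $A_n$ to a linear lower bound on the norm of the sparse special progression $\{d,2d,\ldots,kd\}\subset A_n$; in the absence of any unconditionality this transfer is the real technical content, and it is precisely here that both Szemer\'edi's theorem (to produce a progression of tensor-compatible shape) and the upper multiplicative inequality (to relate it back to $\lambda(k)$) must work in tandem. The pigeonhole step to produce an AP of the form $\{d,2d,\ldots,kd\}$ is itself delicate since such progressions form only a small fraction of all APs in a generic dense set.
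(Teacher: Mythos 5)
Your proposal has two genuine gaps, one of which is fatal as stated. The pigeonhole step in your second paragraph --- extracting from a dense set an arithmetic progression of the special shape $\{d,2d,\dots,kd\}$ whose first term equals its common difference --- is not merely delicate but impossible in general: the set of odd numbers has density $1/2$ and contains no configuration $\{d,2d\}$ at all, since $2d$ is even. No amount of Varnavides-type counting fixes this. The paper works with an AP of \emph{general} shape $\{n_1,n_1+d,\dots,n_1+(k-1)d\}$ and makes it tensor-compatible by writing $n_1=bd+r$ with $1\le r\le d$ and forming $\alpha\otimes\beta_\varepsilon$ where $\alpha=\sum_{i=1}^k a_ie_{b+i}$ and $\beta_\varepsilon=e_r+\varepsilon\sum_{i=r+1}^d e_i$: the last nonzero coordinate of $\beta_\varepsilon$ sits at position $d$, so the block length is $d$, the main term $e_r$ lands exactly on the progression, and letting $\varepsilon\to0$ removes the error term. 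This is the device you would need in place of your special-shape AP.

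The second, more structural gap is that every lower bound you invoke concerns constant coefficients (norms of $\sum_{i\in A}e_i$, or sign averages), whereas the conclusion requires $\|\sum a_ie_i\|\ge c\sum|a_i|$ for \emph{arbitrary} scalars; with no unconditionality hypothesis you cannot pass from ``$\|\sum_{i\in A_n}e_i\|\gtrsim n$'' to a lower bound on the sparse sub-progression, nor does $\lambda(n)\asymp n$ together with \eqref{eq: averageoversigns} justify your claimed estimate $\|\sum a_if_i\|\ge c\lambda(N)\sum|a_i|$ in the third paragraph --- that inequality is essentially the theorem itself. The missing ingredient is Elton's $\ell_1^n$ theorem, which converts \eqref{eq: averageoversigns} into the existence of subsets $A_m\subset\{1,\dots,m\}$ of proportional size on which $(e_i)_{i\in A_m}$ is uniformly equivalent, with arbitrary coefficients, to the unit vector basis of $\ell_1^{|A_m|}$. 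The paper then applies Szemer\'edi inside $A_m$, uses the $\beta_\varepsilon$ trick above together with \eqref{eq: upperineq} to transfer the $\ell_1$ lower bound from the progression to a block of consecutive vectors $(e_{b+i})_{i=1}^k$, and finally tensors with a single $e_s$ to move the estimate back to an initial segment $(e_i)_{i=1}^n$. Your overall architecture (first $\lambda(n)\asymp n$, then bootstrap) cannot be completed without some substitute for Elton's theorem at both stages.
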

\begin{remark} Note that \eqref{eq: bothsides} imples that $\lambda(n) \asymp n^{1/p}$ for some $1 \le p \le \infty$ (see \eqref{eq: twosidedinequality} below). If $p=1$ and $(e_i)$ is unconditional for constant coefficients then \eqref{eq: averageoversigns} is satisfied. So Theorem~\ref{thm: p=1case} gives a positive answer to Question~\ref{question}
in this case.
\end{remark}
 \begin{proof}
We may assume that $\|e_i\| \le 1$ for all $i \ge 1$. By assumption there exist an infinite $M \subseteq \mathbb{N}$ and $\delta>0$ such that 
$$\underset{\pm}{\operatorname{Ave}}\left\|\sum_{i=1}^m \pm e_i\right\|>\delta m$$ for all $m \in M$. By Elton's `$\ell_1^n$ theorem' \cite{E}
there exist $\delta_1>0$ and $c>0$ such that for each $m \in M$ there exists $A_m \subset \{1,2,\dots,m\}$, with $|A_m| \ge \delta_1 m$,
such that for all scalars $(a_i)_{i \in A_m}$,
$$c \sum_{i \in A_m} |a_i| \le \|\sum_{i \in A_m} a_i e_i\| \le \sum_{i \in A_m} |a_i|.$$ Let $k \in \mathbb{N}$. By Szemer\'edi's theorem \cite{S},   when $m$ is sufficiently large $A_m$ contains an arithmetic progression of length $k$, $\{n_1, n_1+d, n_1+2d,\dots, n_1+(k-1)d\}$.
Let $n_1 = bd + r$, where $1 \le r \le d$. Fix scalars $(a_i)_{i=1}^k$ and $\varepsilon>0$,  and set
$$\alpha = \sum_{i=1}^k a_i e_{b+i}\quad\text{and}\quad \beta_\varepsilon=e_r + \varepsilon \sum_{i=r+1}^d e_i.$$
Note that
$$\alpha\otimes \beta _\varepsilon= \sum_{i=1}^k a_i e_{n_1+(i-1)d} +   \varepsilon y$$
for some $y \in \operatorname{span}((e_i))$. Thus, applying \eqref{eq: upperineq},
\begin{align*} c\sum_{i=1}^k |a_i| - \varepsilon \|y\| \le \|\alpha \otimes \beta_\varepsilon\| \le K \|\alpha\| \|\beta_\varepsilon\|
 \le K(1 + (d-r)\varepsilon)\|\alpha\|.
\end{align*} Taking the limit as $\varepsilon \rightarrow 0$, we get $$\|\alpha\| \ge \frac{c}{K} \sum_{i=1}^k |a_i|.$$ Let $n = \lfloor k/3 \rfloor$.
There exists $s \in \mathbb{N}$ such that $[(s-1)n +1,sn] \subset [b+1,b+k]$. We may assume that $a_n \ne 0$. Then, applying \eqref{eq: upperineq} again,
$$ \|\sum_{i=1}^n a_ie_i\| \ge \frac{1}{K} \|e_s \otimes \sum_{i=1}^n a_i e_i\|=\frac{1}{K}\|\sum_{i=1}^n a_i e_{(s-1)n+i}\| \ge \frac{c}{K^2}\sum_{i=1}^n |a_i|.$$
Since $k$ (and hence $n$) are arbitrary, it follows that $(e_i)$ is equivalent to the u.v.b.\ of $\ell_1$.
\end{proof} \begin{corollary} \label{cor: characterization} Suppose that $(e_i)$   satisfies \eqref{eq: upperineq} and  is unconditional for constant coefficients and that 
$\lambda(n) \asymp n$. Then $(e_i)$ is equivalent to the u.v.b.\ of $\ell_1$. \end{corollary}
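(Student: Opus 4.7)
The plan is to deduce the corollary as a direct application of Theorem~\ref{thm: p=1case}. The hypothesis \eqref{eq: upperineq} is given, so the only thing to verify is the averaging condition \eqref{eq: averageoversigns}.

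First, I would invoke the definition of unconditional for constant coefficients: there is a constant $C > 0$ such that for every finite $A \subset \mathbb{N}$ and every choice of signs,
\[
\frac{1}{C}\,\lambda(A) \le \Bigl\|\sum_{i\in A} \pm e_i\Bigr\| \le C\,\lambda(A).
\]
Applied to $A = \{1,2,\ldots,n\}$, this gives $\|\sum_{i=1}^n \pm e_i\| \ge \lambda(n)/C$ for \emph{every} choice of signs. Averaging over signs preserves this lower bound, so
\[
\underset{\pm}{\operatorname{Ave}}\Bigl\|\sum_{i=1}^n \pm e_i\Bigr\| \ge \frac{\lambda(n)}{C}.
\]

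Next, I would use the hypothesis $\lambda(n) \asymp n$, which supplies a constant $c>0$ with $\lambda(n) \ge cn$ for all $n$. Combining this with the previous inequality yields
\[
\frac{1}{n}\underset{\pm}{\operatorname{Ave}}\Bigl\|\sum_{i=1}^n \pm e_i\Bigr\| \ge \frac{c}{C} > 0
\]
for every $n$, so in particular the limsup is positive and \eqref{eq: averageoversigns} holds.

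Finally, with \eqref{eq: upperineq} and \eqref{eq: averageoversigns} both verified, Theorem~\ref{thm: p=1case} applies and concludes that $(e_i)$ is equivalent to the u.v.b.\ of $\ell_1$. There is no real obstacle here; the corollary is essentially a repackaging of Theorem~\ref{thm: p=1case} in which the quantitative hypotheses on $\lambda(n)$ and the signed averages are replaced by the more intuitive conditions $\lambda(n) \asymp n$ and unconditionality for constant coefficients.
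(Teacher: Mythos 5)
Your proposal is correct and follows exactly the route the paper intends (as indicated in the remark preceding the corollary): unconditionality for constant coefficients gives the uniform lower bound $\|\sum_{i=1}^n \pm e_i\| \ge \lambda(n)/C$, which together with $\lambda(n)\asymp n$ verifies \eqref{eq: averageoversigns}, and Theorem~\ref{thm: p=1case} then applies. No gaps.
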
 
% \begin{proof} Clearly, $(e_i)$ satisfies \eqref{eq: averageoversigns}.
%\end{proof}
\begin{remark}  Corollary~\ref{cor: characterization} does not admit an $\ell_p$ version  for $p>1$. To see this,  let $q = p/(p-1)$. As observed  in Examples~\ref{examples} the u.v.b.\ $(e_i)$ of $T_q^*$ is unconditional and  bidemocratic with $\phi(n) \asymp n^{1/p}$. Moreover,  \eqref{eq: multiplicative} dualizes  due to the special property of block bases of  $T_q$ \cite[Cor. II.5]{CS}.
In particular,  $(e_i)$ satisfies  \eqref{eq: upperineq}. Thus, Corollary~\ref{cor: characterization} does not admit an $\ell_p$ version  for $p>1$.
\end{remark} \section{Democratic bases} \label{sec: democratic}
Democratic bases are more general than bidemocratic bases. In this section we prove a characterization of the u.v.b.\ of $\ell_p$ or $c_0$ among democratic bases. 

The characterization involves the notion of quasi-greedy basis which we now recall.
Let $(e_i)$ be a basis for $X$. For $x \in X$ and $\delta>0$, let 
$$G(x,\delta) = \{i \in \mathbb{N} \colon |e_i^*(x)| \ge \delta\}\quad\text{and}\quad \mathcal{G}_\delta(x)= \sum_{i \in G(x,\delta)} e_i^*(x) e_i.$$
Then $(e_i)$ is \textit{quasi-greedy} if there exists $A>0$ such that for all $x \in X$ and for all $\delta>0$, $\|\mathcal{G}_\delta(x)\| \le A \|x\|$.
Quasi-greedy bases were introduced in \cite {KT1} in connection with the Thresholding Greedy Algorithm.
While unconditional bases are quasi-greedy there are also important   examples of  conditional quasi-greedy bases \cite{AADK, KT1, W}. However, quasi-greedy bases are always unconditional for constant coefficients \cite{W}. We refer to \cite{DKKT} for the definition of an \textit{almost greedy} basis. Almost greediness of a basis  is proved    in \cite{DKKT} to be equivalent to being   quasi-greedy and democratic.
\begin{theorem}
\label{almost greedy}
Let $(e_i)$ be a quasi-greedy democratic (i.e., almost greedy) basis for $X$ which satisfies  \eqref{eq: bothsides}. Then $(e_i)$ is equivalent to the u.v.b.\  of $c_0$ or $\ell_p$.
\end{theorem}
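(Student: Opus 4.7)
The plan is to reduce the problem to results already established in the paper through a case analysis on the growth exponent of the fundamental function.

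First I would fix the rate of growth of $\lambda(n)$. Applying \eqref{eq: bothsides} iteratively with $\alpha = \beta = \sum_{i=1}^n e_i$, and noting that $\alpha \otimes \beta = \sum_{i=1}^{n^2} e_i$, yields for all $n,k\ge 1$
\begin{equation} \label{eq: twosidedinequality}
\lambda(n)^k \le K^k \lambda(n^k) \le K^{2k} \lambda(n)^k,
\end{equation}
just as in the proof of Proposition \ref{thm: bidemocratic}. By the argument on p.\ 60 of \cite[Th.\ 2.a.9]{LT1} this forces $\lambda(n) \asymp n^{1/p}$ for some $p \in [1,\infty]$. Since $(e_i)$ is democratic we also obtain $\Phi(n) \asymp n^{1/p}$.

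I would then split the argument into three cases according to $p$. If $p=\infty$, then $\Phi(n)$ is bounded, and because quasi-greedy bases are unconditional for constant coefficients (UCC) by \cite{W}, the norms $\|\sum_{i=1}^n \pm e_i\|$ are uniformly bounded in $n$ and in the choice of signs; hence $(e_i)$ is equivalent to the u.v.b.\ of $c_0$. If $p=1$, UCC yields $\operatorname{Ave}_\pm \|\sum_{i=1}^n \pm e_i\| \ge c \lambda(n) \asymp n$, so \eqref{eq: averageoversigns} holds and Theorem \ref{thm: p=1case} (which only uses the upper half of \eqref{eq: bothsides}) gives equivalence to the u.v.b.\ of $\ell_1$. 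For $1<p<\infty$, the sequence $\Phi(n) \asymp n^{1/p}$ has the upper regularity property, so applying \cite[Prop.\ 5.4]{DKKT} to the democratic UCC basis $(e_i)$ I would deduce that $(e_i)$ is bidemocratic, and then invoke Theorem \ref{main thm: bidemocratic} to conclude that $(e_i)$ is equivalent to the u.v.b.\ of $\ell_p$.

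The main obstacle is the intermediate case $1<p<\infty$: one must upgrade the knowledge of $\Phi(n) \asymp n^{1/p}$ to the two-sided estimate $\Phi^*(n) \asymp n^{1/q}$ on the dual side, with $q$ conjugate to $p$. This is precisely the point at which almost greediness (equivalently, quasi-greediness together with democracy) is used, via the URP-based machinery of \cite{DKKT}. Once bidemocracy is secured, the previously proved Theorem \ref{main thm: bidemocratic} takes over; the cases $p=1$ and $p=\infty$ are, by contrast, essentially one-step consequences of Theorem \ref{thm: p=1case} and the UCC property.
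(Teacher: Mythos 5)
Your proposal is correct and follows essentially the same route as the paper: derive $\lambda(n)\asymp n^{1/p}$ from \eqref{eq: bothsides} via the Lindenstrauss--Tzafriri argument, handle $p=\infty$ and $p=1$ using unconditionality for constant coefficients and Theorem~\ref{thm: p=1case}, and for $1<p<\infty$ upgrade to bidemocracy via the upper regularity property and \cite{DKKT} before invoking the bidemocratic theorem. The only (inconsequential) difference is that the paper cites \cite[Prop.~4.4]{DKKT}, whose hypothesis is quasi-greedy plus democratic rather than UCC plus democratic, but since your basis is quasi-greedy by assumption this changes nothing.
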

\begin{proof}  By considering $\alpha= \sum_{i=1}^m e_i$ and $\beta = \sum_{i=1}^n e_i$, \eqref{eq: bothsides} implies that, for all
 $m,n \in \mathbb{N}$, \begin{equation} \label{eq: twosidedinequality} \frac{1}{K} \lambda(m) \lambda(n) \le \lambda(mn) \le K \lambda(m) \lambda(n).
 \end{equation}
 Hence by \cite{LT1}, $\lambda(n) \asymp n^{1/p}$ for some $1 \le p \le \infty$. Since $(e_i)$ is democratic, it follows that $\Phi(n) \asymp n^{1/p}$.
 
 If $(\Phi(n))$ is bounded, then, since $(e_i)$ is unconditional for constant coefficients,  there exists $C>0$ such that $\|\sum_{i=1}^n \pm e_i\| \le C$ for all $n \ge 1$ and all choices of signs. Hence $(e_i)$ is equivalent to the u.v.b.\ of $c_0$. 
 
 If $\phi(n) \asymp n$ then  by the democratic assumption and unconditionality for constant coefficients, it follows that $$\underset{\pm}{\operatorname{Ave}}\left\|\sum_{i=1}^n \pm e_i\right\| \asymp n.$$
 Hence by Theorem~\ref{thm: p=1case}, $(e_i)$ is equivalent to the u.v.b.\  of $\ell_1$. 
 
 So suppose that $1<p<\infty$. The sequence $(n^{1/p})$ has the `upper regularity property' for $1<p<\infty$ (see \cite[p.\ 586]{DKKT}).
 So $(e_i)$ is quasi-greedy and democratic  and $(\Phi(n))$ has the upper regularity property. Hence by \cite[Prop. 4.4]{DKKT}, $(e_i)$ is bidemocratic.
 In particular, $\Phi^*(n) \asymp n^{1/q}$, where $q=p/(p-1)$.
 
 The proof is now concluded as in Proposition~\ref{thm: bidemocratic} above. More precisely, the  left-hand   inequality of \eqref{eq: bothsides}
 gives an upper $p$-estimate for $(e_i)$ and  \eqref{condition implication}  gives an upper $q$-estimate for $(e_i^*)$.
\end{proof}
\section{Subsequences}
In this section we use results from the theory of spreading models initiated by Brunel and Sucheston \cite{BS}.  We start with a brief review.  A basic sequence $(e_i)$ is \textit{invariant under spreading} (IS)  if there exists  $C>0$ such that 
$$\frac{1}{C} \|\sum_{i=1}^n a_i e_i\| \le \| \sum _{j=1}^n a_j e_{i_j}\| \le C \|\sum_{i=1}^n a_i e_i\|$$
for all $n \ge 1$,  $i_1<\cdots<i_n$, and scalars $(a_i)_{i=1}^n$. 

We say $(e_i)$ \textit{generates a spreading model}  if there exists  a  Banach space $(Y,\|\cdot\|_Y)$ with basis $(s_i)$ such that
$$\|\sum_{i=1}^n a_i s_i\|_Y = \underset{i_1 <\cdots<i_n}{\lim_{i_1 \rightarrow \infty}}\|\sum_{j=1}^n a_j e_{i_j}\|,$$
 for all $n\ge 1$ and  scalars $(a_i)_{i=1}^n$.   It was proved in \cite{BS} that every basic sequence has a subsequence $(x_i)$ which generates a spreading model. Note that $(s_i)$ is IS with $C=1$.  Moreover, $(s_{2i}-s_{2i-1})$ is suppression $1$-unconditional \cite{BS}.
Note that an IS sequence $(e_i)$ is equivalent to the spreading model $(s_i)$ generated by a subsequence of $(e_i)$. Hence $(e_{2i}-e_{2i-1})$ is IS and unconditional. 

%The proof of the  following theorem is similar to \cite[Theorem~1]{FPR} although the hypotheses are  somewhat different.
 \begin{theorem} \label{thm: IS} Suppose that $(e_i)$ is IS and satisfies \eqref{eq: bothsides}. Then $(e_i)$ is equivalent to the u.v.b. of $c_0$ or  $\ell_p$.
\end{theorem}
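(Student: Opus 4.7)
The plan is to reduce to the subsymmetric case, where Theorem~\ref{main thm: bidemocratic} already settles matters, by showing that an IS basis satisfying \eqref{eq: bothsides} must in fact be unconditional. Let $\bar e_i = e_{2i} - e_{2i-1}$, which by the discussion preceding the theorem is IS and unconditional (hence subsymmetric), and fix the single vector $\beta = e_1 - e_2$ with $D := \|\beta\|$. The key computation is that for any $\alpha = \sum_{i=1}^n a_i e_i$,
$$\alpha \otimes \beta = \sum_{i=1}^n a_i (e_{2i-1} - e_{2i}) = -\sum_{i=1}^n a_i \bar e_i.$$
Combined with \eqref{eq: bothsides} this gives
$$\Bigl\|\sum_{i=1}^n a_i \bar e_i\Bigr\| \asymp \Bigl\|\sum_{i=1}^n a_i e_i\Bigr\|,$$
with constants depending only on $K$ and $D$.

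Unconditionality of $(e_i)$ then falls out in one line: using the unconditionality of $(\bar e_i)$ in the middle step, and running the same comparison with $\alpha$ replaced by $\sum \varepsilon_i a_i e_i$ on both ends, I would obtain, for any signs $\varepsilon_i$,
$$\Bigl\|\sum \varepsilon_i a_i e_i\Bigr\| \asymp \Bigl\|\sum \varepsilon_i a_i \bar e_i\Bigr\| \asymp \Bigl\|\sum a_i \bar e_i\Bigr\| \asymp \Bigl\|\sum a_i e_i\Bigr\|.$$
Thus $(e_i)$ is both IS and unconditional, i.e.\ subsymmetric, and so bidemocratic by \cite[Prop.~3.a.6]{LT1}. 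An application of Theorem~\ref{main thm: bidemocratic} then concludes that $(e_i)$ is equivalent to the unit vector basis of $c_0$ or $\ell_p$.

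The main obstacle, in my view, is recognizing that the product structure \eqref{eq: bothsides} upgrades IS to full unconditionality; a priori one might only expect something like unconditionality for constant coefficients. What makes the argument go through is the observation that one specific choice of $\beta$, namely $e_1 - e_2$, makes $\alpha \mapsto \alpha \otimes \beta$ behave as the coordinate change $(a_i) \mapsto \sum a_i \bar e_i$, so that the two-sided norm estimate of \eqref{eq: bothsides} transfers the unconditionality already available on $(\bar e_i)$ back to $(e_i)$.
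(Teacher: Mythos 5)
Your proof is correct, and its engine is identical to the paper's: both hinge on the observation that $(\sum_{i=1}^n a_i e_i)\otimes(e_2-e_1)=\sum_{i=1}^n a_i(e_{2i}-e_{2i-1})$, which together with \eqref{eq: bothsides} shows that $(e_i)$ is equivalent to the unconditional IS difference sequence $(e_{2i}-e_{2i-1})$. You diverge only in how you finish. The paper notes that this equivalence makes $(e_i)$ unconditional and democratic, hence almost greedy, and invokes Theorem~\ref{almost greedy}; that theorem in turn leans on the $\ell_1$-characterization of Theorem~\ref{thm: p=1case} (Elton's theorem plus Szemer\'edi) to handle the case $\Phi(n)\asymp n$. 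You instead make the unconditionality of $(e_i)$ explicit via the chain of equivalences, observe that unconditional together with IS is exactly subsymmetric, quote \cite[Prop.~3.a.6]{LT1} for bidemocracy, and apply Theorem~\ref{main thm: bidemocratic} directly. Your route is arguably lighter on machinery: it bypasses the quasi-greedy/almost-greedy apparatus of Section~\ref{sec: democratic} and the combinatorial input behind Theorem~\ref{thm: p=1case}, since Proposition~\ref{thm: bidemocratic} disposes of the endpoint cases $p=1,\infty$ by duality (Remark~\ref{rem: extremecases}). The only cosmetic remark is that your three-link chain establishing unconditionality is already implicit in the single statement that $(e_i)$ is equivalent to an unconditional sequence, so it could be compressed; everything you assert checks out.
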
 
\begin{proof} As remarked above, $(e_{2i}-e_{2i-1})$ is IS and  unconditional. Because, for every choice of scalars $a_1,\ldots,a_n$,
\[\Big(\sum_{i=1}^na_ie_i\Big)\otimes(e_2-e_1) = \sum_{i=1}^na_i(e_{2i}-e_{2i-1}),\]
\eqref{eq: bothsides}, yields that $(e_i)_i$ is equivalent to $(e_{2i}-e_{2i-1})$. Therefore, $(e_i)_i$ is almost greedy and thus by Theorem \ref{almost greedy} the result follows.
\end{proof}

To state the next result we first make some clarifying remarks to avoid a possible source of confusion. For any  given semi-normalized  basic sequence $(e_i)$ we defined a multiplication $\otimes$ on $\operatorname{span}((e_i))$ by \eqref{eq: multiplication}. We must emphasize, however, that for a different choice of  basic sequence, $(f_i)$ say, the corresponding multiplication will also be different. But to avoid cumbersome notation we use the same symbol $\otimes$ for both.
Likewise, when we say in the next result that all subsequences of $(e_i)$   satisfy \eqref{eq: bothsides}, it is to be understood that the constant $K$ in \eqref{eq: bothsides} 
will depend on the  subsequence, i.e., we are not assuming a priori that there is a uniform $K$  for all subsequences. (Of course, as the result shows,  a uniform $K$ does in fact  exist.)
\begin{theorem} \label{thm: hereditary}  Let $(e_i)$ be a semi-normalized basic sequence. Suppose that every subsequence of $(e_i)$ satisfies \eqref{eq: bothsides}. 
Then $(e_i)$ is equivalent to the u.v.b. of $c_0$ or $\ell_p$.
\end{theorem}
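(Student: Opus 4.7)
The plan is to combine Brunel--Sucheston's spreading model construction with infinite Ramsey theory (Galvin--Prikry) and Theorem \ref{thm: IS}. The idea is: first extract a subsequence on which \eqref{eq: bothsides} holds with a uniform constant for every further subsequence; then use its spreading model to invoke Theorem \ref{thm: IS}; and finally lift the equivalence back to $(e_i)$ via the multiplicative identity $\|\alpha\otimes e_k\|\asymp\|\alpha\|$.

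First I would apply the Galvin--Prikry theorem to the map $M\mapsto K_M$ on $[\mathbb{N}]^{\omega}$, where $K_M$ denotes the optimal constant in \eqref{eq: bothsides} for the subsequence $(e_m)_{m\in M}$. This map is lower semicontinuous, being a supremum of continuous functions indexed by rational coefficients, and by hypothesis $K_M<\infty$ for every $M$. Hence the partition induced by $\{M:K_M\le C\}$ is completely Ramsey, and Galvin--Prikry produces an infinite $M_0\subseteq \mathbb{N}$ together with a constant $C$ such that $K_{M'}\le C$ for every infinite $M'\subseteq M_0$. By Brunel--Sucheston I would then pass to a further subsequence $(x_l)=(e_{m_l})$ of $M_0$ generating a spreading model $(s_l)$, which is automatically invariant under spreading.

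Next, I would show that $(s_l)$ inherits \eqref{eq: bothsides} with constant $C$. For $\alpha=\sum_i a_i s_i$ and $\beta=\sum_j b_j s_j$ in $\operatorname{span}((s_l))$, the spreading model formula expresses the spreading model norm of $\alpha\otimes\beta$ as the limit, over far subsequences $(y_l)=(x_{k_l})$, of $\|\tilde\alpha\otimes\tilde\beta\|$ computed with respect to $(y_l)$'s multiplication, where $\tilde\alpha=\sum_i a_i y_i$ and $\tilde\beta=\sum_j b_j y_j$. Since $(y_l)$ is a subsequence of $M_0$, it satisfies \eqref{eq: bothsides} with constant at most $C$, so $\|\tilde\alpha\otimes\tilde\beta\|\le C\|\tilde\alpha\|\|\tilde\beta\|$; passing to the limit gives the same bound for $(s_l)$, and the lower bound is analogous. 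Since $(s_l)$ is IS and satisfies \eqref{eq: bothsides}, Theorem \ref{thm: IS} identifies $(s_l)$ with the u.v.b.\ of $c_0$ or $\ell_p$ for some $p$.

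The final and most delicate step is to lift this equivalence from $(s_l)$ back to $(e_i)$. For $\alpha=\sum_l a_l e_l\in\operatorname{span}((e_i))$, the multiplicative property for $(e_i)$ gives $\|\alpha\|\asymp\|\alpha\otimes e_k\|=\|\sum_l a_l e_{lk}\|$ for every $k$, with constant depending only on $(e_i)$. As $k\to\infty$ the indices $\{lk\}$ spread out in $\mathbb{N}$. Applying the same reasoning to the infinite subsequence $(e_{lk})_{l\ge 1}$ (which also satisfies the hereditary hypothesis), together with the fundamental-function identity $\lambda(n)\asymp n^{1/p}$ determined by $(e_i)$ via \eqref{eq: twosidedinequality}, one shows that any accumulation point of $\|\sum_l a_l e_{lk}\|$ as $k\to\infty$ is equivalent to $\|\sum_l a_l s_l\|$, and hence to $(\sum_l|a_l|^p)^{1/p}$. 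Thus $\|\alpha\|\asymp(\sum_l|a_l|^p)^{1/p}$, so $(e_i)$ is equivalent to the u.v.b.\ of $\ell_p$ (the $c_0$ case is analogous). The main obstacle is exactly this transfer: making the convergence of $\|\sum_l a_l e_{lk}\|$ to the spreading model norm rigorous while controlling equivalence constants uniformly across the varying subsequences $(e_{lk})_{l\ge 1}$, which likely requires a second application of Ramsey theory or a diagonal argument combined with the uniform constant from the first Ramsey step; it is here that the full hereditary hypothesis "every subsequence satisfies \eqref{eq: bothsides}" is essential.
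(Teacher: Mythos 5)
Your proposal has a genuine gap at exactly the point you flag as ``the most delicate step'': transferring the identification of the spreading model $(s_l)$ back to the whole sequence $(e_i)$. The identity $\|\alpha\|\asymp\|\alpha\otimes e_k\|=\|\sum_l a_l e_{lk}\|$ is correct, but there is no reason that the norms $\|\sum_l a_l e_{lk}\|$ accumulate at the spreading model norm $\|\sum_l a_l s_l\|_Y$: that spreading model was generated by one particular subsequence $(x_l)$ sitting inside your Ramsey-extracted set $M_0$, whereas the arithmetic progressions $\{k,2k,\dots,nk\}$ are unrelated subsequences of $\mathbb{N}$, and distinct subsequences of $(e_i)$ may a priori generate inequivalent spreading models. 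Neither the fundamental-function estimate $\lambda(n)\asymp n^{1/p}$ nor a second application of Ramsey theory obviously closes this; what is needed is a device that forces an initial segment of $(e_i)$ and a block approximating $(s_l)$ to be uniformly equivalent, and your outline does not supply one. (Your first two steps are sound: the Galvin--Prikry uniformization, modulo a diagonal argument, and the verification that $(s_l)$ inherits \eqref{eq: bothsides} both work. But they are also not where the difficulty lies, and in fact the uniform constant is not needed.)

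The paper bridges the gap differently and more directly: it shows $(e_i)$ is itself IS and then applies Theorem~\ref{thm: IS} to $(e_i)$, so no transfer from $(s_l)$ is required. The key is Lemma~\ref{lem: uniformlyequivalent}, which says that for any sequence satisfying \eqref{eq: bothsides} the consecutive blocks $(e_{mn+i})_{i=1}^n$ are uniformly equivalent to $(e_i)_{i=1}^n$ (via $e_{m+1}\otimes\sum a_ie_i$). One then builds a single subsequence $(x_i)$ in blocks of length $3^n$: the block $(x_{3^n+i})_{i=1}^{3^n}$ is a consecutive block $(e_{m3^n+i})_{i=1}^{3^n}$ of the original sequence, and the block $(x_{2\cdot3^n+i})_{i=1}^{3^n}$ is a far-out stretch of $(y_i)$ that $2$-approximates $(s_i)_{i=1}^{3^n}$. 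Applying Lemma~\ref{lem: uniformlyequivalent} to $(e_i)$ identifies the first of these blocks with $(e_i)_{i=1}^{3^n}$, and applying it to $(x_i)$ (which satisfies \eqref{eq: bothsides} by the hereditary hypothesis --- this is the only place that hypothesis is used beyond the original sequence) identifies the two blocks with each other, hence $(e_i)_{i=1}^{3^n}$ with $(s_i)_{i=1}^{3^n}$, uniformly in $n$. That interleaving construction is the missing idea in your argument.
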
 The proof requires the following lemma. \begin{lemma} \label{lem: uniformlyequivalent}
 Let $(e_i)$ be a semi-normalized basic sequence which satisfies \eqref{eq: bothsides}. Then all sequences of the form $(e_{mn+i})_{i=1}^n$
 ($m,n \in \mathbb{N}$) are uniformly equivalent to $(e_i)_{i=1}^n$, i.e., there exists $C>0$ such that for all $m,n \in \mathbb{N}$ and all scalars $(a_i)_{i=1}^n$,
 $$\frac{1}{C}\|\sum_{i=1}^n a_i e_i\| \le \|\sum_{i=1}^n a_i e_{mn+i}\| \le C \|\sum_{i=1}^n a_i e_i\|.$$
\end{lemma}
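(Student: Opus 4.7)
The plan is to exploit the fact that the binary operation $\otimes$ of \eqref{eq: multiplication} provides a built-in mechanism for translating a block of length $n$ to any starting position $mn+1$. Specifically, I will take $\alpha = e_{m+1}$ and $\beta = \sum_{i=1}^n a_i e_i$ where $a_n \neq 0$. Viewing $\alpha$ as $\sum_{i=1}^{m+1}\alpha_i e_i$ with $\alpha_{m+1}=1$ and all other $\alpha_i = 0$, the definition \eqref{eq: multiplication} collapses to the single nontrivial term coming from $i=m+1$, and yields
\[
\alpha \otimes \beta \;=\; \sum_{j=1}^n a_j\, e_{mn+j}.
\]

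Now apply \eqref{eq: bothsides} to this pair to obtain
\[
\frac{1}{K}\|e_{m+1}\|\,\|\beta\| \;\le\; \Big\|\sum_{j=1}^n a_j\, e_{mn+j}\Big\| \;\le\; K\|e_{m+1}\|\,\|\beta\|.
\]
Semi-normalization of $(e_i)$ bounds $\|e_{m+1}\|$ above and away from $0$ uniformly in $m$, so the claim follows with a constant depending only on $K$ and on the semi-normalization bounds of $(e_i)$.

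The one technical point is the constraint $b_m \neq 0$ built into the definition of $\otimes$: when the scalars $(a_i)_{i=1}^n$ happen to satisfy $a_n = 0$, the vector $\beta$ does not fit the formula for $\alpha \otimes \beta$ directly. I would handle this exactly as in Remark~\ref{condition implication}, by replacing $\beta$ with the perturbation $\beta_\varepsilon := \beta + \varepsilon e_n$ (whose last nonzero coefficient is at position $n$ for all $\varepsilon > 0$ small enough), applying the argument above to $\beta_\varepsilon$ to get
\[
\frac{1}{K}\|e_{m+1}\|\,\|\beta_\varepsilon\| \;\le\; \Big\|\sum_{j=1}^n a_j\, e_{mn+j} + \varepsilon e_{mn+n}\Big\| \;\le\; K\|e_{m+1}\|\,\|\beta_\varepsilon\|,
\]
and then letting $\varepsilon \to 0$ to recover the estimate for $\beta$ itself. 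I do not anticipate any genuine obstacle; the substantive content is simply the observation that left multiplication by $e_{m+1}$ realizes the index translation by $mn$, after which the bilinear inequality \eqref{eq: bothsides} converts directly into the desired one-sided uniform equivalence.
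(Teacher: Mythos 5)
Your proposal is correct and is essentially identical to the paper's proof, which likewise observes that $e_{m+1} \otimes \bigl(\sum_{i=1}^n a_i e_i\bigr) = \sum_{i=1}^n a_i e_{mn+i}$ and then applies \eqref{eq: bothsides} together with semi-normalization. Your extra care with the case $a_n = 0$ via the $\varepsilon$-perturbation is a valid (and slightly more scrupulous) treatment of a detail the paper passes over silently.
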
 \begin{proof} This follows at once from \eqref{eq: bothsides} along with the observation that
$$ e_{m+1} \otimes (\sum_{i=1}^n a_i e_i )= \sum_{i=1}^n a_i e_{mn+i}.$$
\end{proof} \begin{proof}[Proof of Theorem~\ref{thm: hereditary}] 
 By Theorem~\ref{thm: IS}, it suffices to prove that $(e_i)$ is IS. Let $(y_i)$ be a subsequence of $(e_i)$ which generates a spreading model $(Y, \|\cdot\|_Y)$ with basis $(s_i)$. We define a subsequence $(x_i)$ of $(e_i)$ inductively. For $1 \le i \le 3$, let $x_i = e_i$.  For the inductive step, suppose that $n \ge 1$ and that 
 $(x_i)_{i=1}^{3^n}$ have been defined with $x_i = e_{N(i)}$, where $(N(i))_{i=1}^{3^n}$ is strictly increasing. Choose $m \in \mathbb{N}$ with $m3^n>N(3^n)$ and define $x_{3^n + i} = e_{m3^n +i}$ for $1 \le i \le 3^n$. Thus, $x_i$ has now  been defined for $1 \le i \le 2\cdot 3^n$.

 Now choose $p > (m+1)3^n$ such that \begin{equation} \label{eq: equivspreadingmodel} \frac{1}{2}\|\sum_{i=1}^{3^n} a_i s_i\|_Y \le \|\sum_{i=1}^{3^n} a_i y_{p+i}\|
 \le 2 \|\sum_{i=1}^{3^n} a_i s_i\|_Y
 \end{equation} for all scalars $(a_i)_{i=1}^{3^n}$. This is possible because $(y_i)$ generates the spreading model with basis $(s_i)$. Define
 $x_{2\cdot 3^n+i} = y_{p+i}$ for $1 \le i \le 3^n$. This completes the inductive definition of $x_i = e_{N(i)}$ for $1 \le i \le 3^{n+1}$. Note that
 $$N(3^n+1) = m3^n+1>N(3^n)$$ and
 $$N(2\cdot 3^n + 1) \ge p+1) > (m+1)\cdot 3^n =  N(2 \cdot 3^n).$$ Hence $(N(i))_{i=1}^{3^{n+1}}$ is strictly increasing as desired.
 
  By assumption, $(x_i)$ satisfies \eqref{eq: bothsides} for some $K>0$. Hence by Lemma~\ref{lem: uniformlyequivalent}, $(x_{3^n+i})_{i=1}^{3^n}
  = (e_{m\cdot3^n+i})_{i=1}^{3^n}$ is uniformly equivalent to $(e_i)_{i=1}^{3^n}$. Again by Lemma~\ref{lem: uniformlyequivalent},
  $(x_{3^n+i})_{i=1}^{3^n}$  is uniformly equivalent to $(x_{2\cdot3^n+i})_{i=1}^{3^n}$, which in turn is uniformly equivalent to $(s_i)_{i=1}^{3^n}$ by \eqref{eq: equivspreadingmodel}. So $(e_i)_{i=1}^{3^n}$ is uniformly equivalent to $(s_i)_{i=1}^{3^n}$, i.e., $(e_i)$ is equivalent to $(s_i)$ as desired.
\end{proof}
We conclude with a characterization of basic sequences that are saturated by subsequences equivalent to the u.v.b. of $c_0$ or $\ell_p$.

 Let $[\mathbb{N}]^\omega$ denote the collection of increasing sequences  $(n_k)_{k=1}^\infty$ of natual numbers endowed with  the product topology.
 \begin{theorem} Let $(e_i)$ be a semi-normalized basic sequence. The following are equivalent: \begin{itemize}
\item[(a)] every subsequence of $(e_i)$ contains a further subsequence equivalent to the u.v.b. of $c_0$ or $\ell_p$;
\item[(b)]  every subsequence of $(e_i)$ contains a further subsequence satisfying \eqref{eq: bothsides} for some $K>0$. \end{itemize} \end{theorem}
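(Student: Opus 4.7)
The implication (a) $\Rightarrow$ (b) is immediate, since the unit vector bases of $c_0$ and $\ell_p$ satisfy \eqref{eq: bothsides}. For (b) $\Rightarrow$ (a) the plan is to invoke the Galvin--Prikry theorem in order to reduce the statement to Theorem~\ref{thm: hereditary}. Specifically, fix an arbitrary $L\in[\mathbb{N}]^\omega$; it suffices to produce $P\in[L]^\omega$ such that $(e_i)_{i\in P}$ is equivalent to the u.v.b.\ of $c_0$ or $\ell_p$.

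For each integer $K\ge 1$ define
\[A_K = \{M\in[\mathbb{N}]^\omega : (e_i)_{i\in M}\text{ satisfies \eqref{eq: bothsides} with constant } K\},\]
and set $B = \bigcup_{K\ge 1} A_K$. The first step is to verify that each $A_K$ is closed in the product topology. Indeed, if $M\notin A_K$ then the failure of \eqref{eq: bothsides} for $(e_i)_{i\in M}$ is witnessed by finitely supported vectors $\alpha,\beta$ whose coefficients use only an initial segment $m_1<\cdots<m_N$ of $M$; any $M'$ agreeing with $M$ on its first $N$ entries then inherits the same witness, so $M'\notin A_K$ as well. Hence $A_K^c$ is open, $A_K$ is closed, and $B$ is $F_\sigma$, in particular Borel.

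Hypothesis (b) says precisely that $B\cap[L']^\omega\neq\emptyset$ for every $L'\in[L]^\omega$. By the Galvin--Prikry theorem the Borel set $B$ is Ramsey, so there exists $P\in[L]^\omega$ with either $[P]^\omega\subseteq B$ or $[P]^\omega\cap B = \emptyset$; the second alternative contradicts (b) applied to $L' = P$, so the first must hold. In other words, every subsequence of $(e_i)_{i\in P}$ satisfies \eqref{eq: bothsides} for some constant (which a priori depends on the subsequence). This is exactly the hypothesis of Theorem~\ref{thm: hereditary} applied to $(e_i)_{i\in P}$, which is therefore equivalent to the u.v.b.\ of $c_0$ or $\ell_p$; since $L$ was arbitrary, (a) follows. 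The only nontrivial point is the closedness of $A_K$ (hence Borelness of $B$), which rests on the fact that a violation of \eqref{eq: bothsides} is recorded by a finite witness; once this is in place, Galvin--Prikry together with Theorem~\ref{thm: hereditary} close the argument.
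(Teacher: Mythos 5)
Your proof is correct and follows essentially the same route as the paper: Galvin--Prikry applied to the Borel set of subsequences satisfying \eqref{eq: bothsides}, with the homogeneous side landing in the hypothesis of Theorem~\ref{thm: hereditary}. The only difference is that you spell out why $B$ is Borel (writing it as an $F_\sigma$ via finite witnesses of failure), a point the paper dismisses as ``easily seen''; your justification is valid.
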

\begin{proof} (a) $\Rightarrow$ (b) is obvious. Suppose (b) holds. Let $(f_i)$ be any subsequence of $(e_i)$. Let
$$B = \{ (n_k)_{k=1}^\infty \in [\mathbb{N}]^\omega \colon (f_{n_k})_{k=1}^\infty \text{\, satisfies \eqref{eq: bothsides} for some $K>0$}\}.$$
Then $B$ is easily seen to be a Borel set. Hence by the infinite Ramsey theorem of Galvin and Prikry \cite{GP} there  exists $(n_k)_{k=1}^\infty \in [\mathbb{N}]^\omega$
such that either every subsequence of $(n_k)_{k=1}^\infty$ belongs to $B$ or every subsequence belongs to the complement of $B$. The latter contradicts (b). By Theorem~\ref{thm: hereditary}, the former implies that $(f_{n_k})$ is equivalent to the u.v.b. of $c_0$ or $\ell_p$.
\end{proof}
\end{example}

\end{document}